\newcommand{\real}{\mathbb{R}}
\newcommand{\Ma}[1]{\left\langle{#1}\right\rangle}
\newcommand{\M}[1]{\left({#1}\right)}
\newcommand{\Mb}[1]{\left[{#1}\right]}
\DeclareMathOperator{\supp}{supp}
\renewcommand{\Re}{\text{Re}\,}
\newtheorem{remark}{Remark}
\title{Imaging with thermal noise induced currents\thanks{Submitted to the editors May 8 2023.
\funding{This work was partially funded by the National Science Foundation grants DMS-2008610 and DMS-2136198.}}}
\author{Trent DeGiovanni\thanks{Mathematics Department, University of Utah, Salt Lake City, UT 84112 (\email{degiovan@math.utah.edu}, \email{fguevara@math.utah.edu}).} \and Fernando Guevara Vasquez\footnotemark[2] \and China Mauck\thanks{Formerly: Mathematics Department, University of Utah, Salt Lake City, UT 84112. Currently: STV Incorporated, 200 W Monroe St {\#}1650, Chicago, IL 60606.}}
\begin{document}
\maketitle
 \begin{abstract}
    We use thermal noise induced currents to image the real and imaginary parts of the conductivity of a body. Covariances of the thermal noise currents measured at a few electrodes are shown to be related to a deterministic problem. We use the covariances obtained while selectively heating the body to recover the real power density in the body under known boundary conditions and at a known frequency. The resulting inverse problem is related to acousto-electric tomography, but where the conductivity is complex and only the real power is measured. We study the local solvability of this problem by determining where its linearization is elliptic. Numerical experiments illustrating this inverse problem are included.
 \end{abstract}

 \headers{Imaging with thermal noise induced currents}{T. DeGiovanni, F. Guevara~Vasquez, and C. Mauck}

 \begin{keywords}
	Conductivity imaging, Hybrid inverse problems, Thermal noise, Johnson-Nyquist noise
 \end{keywords}

 \begin{MSCcodes} 
35R30, 
35J25, 
35Q61  
 \end{MSCcodes}

\section{Introduction}\label{sec:intro}

In an electrical conductor, the excitement of charge carriers due to heat produces random currents. This phenomenon is called Johnson-Nyquist noise and was first observed in the early 20th century \cite{Johnson:1928:TAE,Nyquist:1928:TAE}. Given a single component with impedance $Z(\omega)$ (in Ohms) at an angular frequency $\omega$ (in $2\pi$ Hz), the variance $\Ma{|J(\omega)|^2}$  of the random currents (in $\textrm{A}^2$) is 
\begin{equation}\label{eqn:Jnnoise}
\Ma{|J(\omega)|^2}=\frac{2 \kappa T}{\pi}\frac{\Re(Z(\omega))}{|Z(\omega)|^2}  \Delta \omega.
\end{equation}
Here $\kappa \approx 1.36 \times 10^{-23} \textrm{J} \cdot \textrm{K}^{-1}$ is Boltzmann's constant, $T$ is temperature (in Kelvin) and $\Delta \omega$ is a bandwidth of interest around $\omega$. Although this noise in a nuisance in electrical circuits, we show one way to use it to image the conductive properties of a body. Johnson-Nyquist noise and its generalization to the Maxwell equations (see, e.g., \cite{Rytov:1989:PSR3}) are examples of a more general physical principle called the fluctuation dissipation theorem, which relates the variance of fluctuations of a linear system about an equilibrium to the dissipative properties of the system, see e.g. \cite{Kubo:1966:FDT,Rytov:1988:PSR2,Zwanzig:2001:NSM}. 

To image the conductivity of a body, we propose heating the body while simultaneously measuring the variance of the thermal noise induced currents using electrodes that are connected to the ground. For instance, this can be done on a two-dimensional conductive body as illustrated in \cref{fig:exp_setup}, with electrodes on its boundary that are connected to the ground (zero voltage or potential).  The electrical measurements are made while the body is heated at a known spatial location via an external source, e.g. a laser, and the process is repeated at different locations to scan the body. In our approach, we also need to subtract measurements of thermal noise induced currents at a known and constant background temperature.

\begin{figure}[h]
\centering
\includegraphics[width=.2\textwidth]{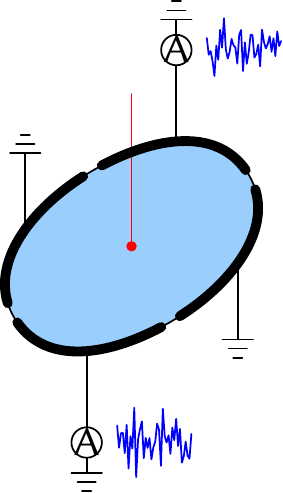}
\caption{A two-dimensional conductive body is attached to the ground via electrodes which are used to measure the thermal noise currents resulting from  heating the body at particular locations, e.g. the location depicted in red.}
\label{fig:exp_setup}
\end{figure}

Our main contribution is to show that such thermal noise current measurements are equivalent to measuring the real power dissipated inside the conductive body, i.e.
\begin{equation}\label{eqn:func}
\sigma'(x)|\nabla u(x)|^2,
\end{equation}
where $\sigma'(x)$ is the real part of the conductivity and $u$ solves an appropriate (deterministic) auxiliary problem which depends on electrodes configuration and the conductivity $\sigma(x)$ \textit{which can be complex} (see \cref{sec:det}). 

\subsection{Related work}
Recovering $\sigma'(x)$ from functionals of the form \eqref{eqn:func} is well-studied for the case of real $\sigma$ in ultrasound modulated electrical impedance tomography or acousto-electric tomography \cite{Ammari:2008:EIT}, where the internal functional \eqref{eqn:func} is measured by locally perturbing a conductive body using ultrasound waves, while making electrical measurements on the body's surface. Various reconstruction approaches have since been studied for this problem \cite{Capdeboscq:2009:IMN,Kuchment:2010:SFU, Kuchment:2011:RAE,Bal:2013:LMI,Bal:2013:IDK} as well as its well-posedness \cite{Gebauer:2008:IAT,Kuchment:2012:SIP,Bal:2013:CPU}. The problem of recovering an anisotropic real conductivity has also been studied \cite{Bal:2014:LIF, Monard:2012:IAD,Monard:2012:IDP,Monard:2013:IAC}. A similar problem using microwaves instead of ultrasound is discussed in \cite{Ammari:2011:MIE}. Optical tomography can also be modulated by ultrasound, allowing measurements of a functional similar to \eqref{eqn:func}, see \cite{Bal:2010:ISA,Powell:2016:GBQ}. Hybrid inverse problems (including acousto-electric tomography) have been studied by formulating them as an overdetermined system of non-linear partial differential equations and then studying their local uniqueness properties by linearizing, see \cite{Kuchment:2012:SIP,Bal:2014:HIP}. For reviews on hybrid inverse problems, see \cite{Ammari:2008:IME,Bal:2013:HIP}. 

The $\sigma$ complex case is considered in \cite{Bal:2013:RCS}, but this analysis only applies if the fields are known (as in elastography). Complex $\sigma$ were also considered in the case of the Maxwell equations in \cite{Zhou:2014:HIP}. However, to our knowledge, there is no study of the functional \eqref{eqn:func} where $u$ depends on a complex $\sigma$, but only its real part $\sigma'$ appears explicitly in the measurements. 

\subsection{Possible applications}
The biggest challenge to the applicability of the method that we present here is that the thermally induced random currents are very small and this could introduce signal-to-noise issues. We envision two possible applications. 

The first possible application would be to Atomic Force Microscopy (AFM). In this imaging modality, a height-map of a sample is obtained by measuring the deflections of a cantilever as its tip scans the sample. A heated cantilever tip can be used to heat the sample locally without touching it, see e.g. \cite{King:2013:HAF}. Moreover, electrical measurements of thermal noise induced currents can be done simultaneously with the AFM scan. An advantage of this approach is that one can measure height and conductivity of the sample without touching the sample, possibly making the cantilever tip last longer. We mention that conductivity variations can be measured in AFM by creating a voltage difference between the sample and the cantilever (assuming it is conductive).  This method is known as Conductive Atomic Force Microscopy or CAFM, see e.g. the review in \cite{Lanza:2017:CAF}.

The second possible application is to monitoring of laser welding, see e.g. \cite{Cline:1977:HTM}. If two sheets of metal are been welded together, their temperature is raised significantly near the weld and the sheets also become electrically connected. We believe that by measuring thermal noise currents, one can monitor whether the weld was effective. We give an order of magnitude of the signals and background that would need to be measured in this situation in \cref{rem:measure_size}.

\subsection{Contents}
We start in \cref{sec:quasi} by deriving a quasi-static model from the Maxwell equations with a current source modeling the random currents. In \cref{sec:det} we show how variances of the random currents are related to a deterministic problem. This is done for two different kinds of boundary conditions. Moreover we give rough magnitude estimates for the currents that would need to be measured to implement our approach.  In \cref{sec:stability}, we analytically and numerically analyze the linearized real problem (\cref{sec:real_prob}) and linearized complex problem (\cref{sec:comp_prob}). This analysis is based on \cite{Bal:2014:HIP} where the ellipticity, in the  Douglis-Nirenberg sense \cite{Douglis:1955:IEE}, is established for the real linearized problem. We give a condition in \cref{lem:sym} for the linearized problem with complex conductivity to be elliptic in the Douglis-Nirenberg sense \cite{Douglis:1955:IEE}. Still, it remains unclear if boundary conditions exist such that the fields associated with the auxiliary problem satisfy this condition. Then in \cref{sec:numrec} we present a simple numerical reconstruction approach based on a finite difference discretization of the problem (\cref{sec:disc_model}). We solve the inverse problem using data that either comes directly from the internal functional \eqref{eqn:func} or from simulated realizations of random currents. In addition, we show reconstructions in the case that conductivity is  real (\cref{sec:real}) or complex (\cref{sec:complex}). Finally, we summarize our results in \cref{sec:summary}. 

\section{The quasi-static model}
\label{sec:quasi}
In an isotropic medium, thermal fluctuations induce fluctuation of
charge carriers near an equilibrium. For non-magnetic media, the thermal fluctuation
currents can be modeled by a random external electric current $j_e$ ($\textrm{A}/\textrm{m}^2)$ in
the Maxwell equations \cite{Rytov:1989:PSR3}, namely
\begin{equation}
\begin{aligned}
\nabla \times H &= i \omega \varepsilon E + j_e\\
\nabla \times E &= -i \omega \mu H.
\end{aligned}
\label{eq:maxwell}
\end{equation}
Here $E$ and $H$ are the electric and magnetic fields, and the angular frequency is $\omega$. The convention for time harmonic fields here is that $\mathcal{E}(x,t) = \Re\left[E(x,\omega) \exp[ i \omega t]\right]$. The electric
permittivity is $\varepsilon$ and may be written as $\varepsilon =
\varepsilon' - i
\sigma' / \omega$, where $\varepsilon' \equiv \Re \varepsilon$ and $\sigma'$
is the real conductivity. The magnetic permeability $\mu$ is assumed
real and equal to that of the vacuum. Note that if $\mu$ had an imaginary part (i.e. non-zero magnetic losses), then an analogous ``random magnetic current'' needs to be added to the Maxwell
equations. The fluctuation dissipation theorem (see e.g. \cite[Chapter 1]{Zwanzig:2001:NSM}
and the particular application to the Maxwell equations in
\cite{Rytov:1989:PSR3}) states that the random current field $j_e$
 has zero mean $\Ma{j_e} = 0$ and its
covariance (at a fixed frequency) is 
\begin{equation}
\Ma{j_e(x) j_e^*(x')} = \frac{\kappa}{\pi} T(x) \sigma'(x)
\delta(x-x') \mathbb{I} \Delta \omega,
\label{eq:currcorr}
\end{equation}
where $T(x)$ is the temperature in Kelvin at a point $x$,  $\mathbb{I}$ is the identity matrix and $\Delta \omega$ is a frequency band of interest around $\omega$. We emphasize that \eqref{eq:currcorr} depends only on the temperature
and the real part of the conductivity. The real part of the
electrical permittivity (which is associated with lossless behavior) does not directly appear in \eqref{eq:currcorr}. While \eqref{eq:currcorr} gives the entire covariance matrix, only its diagonal entries are needed for our reconstructions. In
general \eqref{eq:currcorr} should use the energy of a quantum
oscillator \cite{Rytov:1989:PSR3} instead of $\kappa T$, namely
\begin{equation}
\Theta(T,\omega) = \frac{\hslash \omega}{2} \coth \frac{\hslash
	\omega}{2\kappa T},
\end{equation}
where $\hslash \approx 1.05 \times 10^{-34} \textrm{J}\cdot\textrm{s}$ is Planck's constant. Here we assume we work with relatively small frequencies so that
$\kappa T \gg \hslash \omega$ and we can make the approximation $\Theta(T,\omega) \approx
\kappa T$, see also \cite{Landau:1968:CTP}. In particular, this approximation is valid at room temperature and frequencies of the order of $1$kHz or $1$MHz.

Instead of working with the Maxwell equations, we use a quasi-static
approximation  that is used in electrical
impedance tomography, see e.g. \cite{Cheney:1999:EIT,Borcea:2002:EIT}. In this approximation, it is convenient to define
the complex conductivity $\sigma$  by
\begin{equation}
\sigma(x) = \sigma'(x) + i \omega\varepsilon'.
\label{eq:cplxsigma}
\end{equation}
If we assume that  $\omega \mu |\sigma| L^2
\ll 1$, where $L$ is the characteristic length of the problem, then one can use the approximation $\nabla \times E \approx
0$. In other words, we may assume that the electric field comes from a potential $E = -
\nabla \phi$. By taking divergence on both sides of the first equation
in \eqref{eq:maxwell} we get
\begin{equation}
\nabla \cdot [ \sigma \nabla \phi] = \nabla \cdot j_e.
\end{equation} 
\begin{remark}
	As noted in \cite{Cheney:1999:EIT}, the quasi-static approximation holds for conductivities consistent with human tissues (see, e.g., \cite{Borcea:2002:EIT}). For example if we take $L = 10~\mathrm{cm}$, $\sigma' = 2~\mathrm{cm}^{-1} \mathrm{k}\Omega^{-1}$,  $\omega = 2\pi10~\mathrm{kHz}$ and $\epsilon' = 1 \mu\mathrm{F}/\mathrm{m}$, we get $\omega \mu |\sigma| L^2 \approx 1.7\times 10^{-4} \ll 1$.
\end{remark}
\section{From the stochastic to the deterministic problem}
\label{sec:det}
Let $\Omega$ be a smooth simply connected open domain of $\real^3$ and let $\sigma \in C^1(\overline{\Omega})$ with its real part satisfying $\sigma'>c$ for some positive constant $c$. We assume a potential $\phi$ satisfies
\begin{equation}
\begin{aligned}
\nabla \cdot [ \sigma \nabla \phi ] &= \nabla \cdot j_e,~\text{in}~\Omega,\\
\phi &= 0,~\text{on}~\partial\Omega.
\end{aligned}
\label{eq:phi}
\end{equation}
Here $j_e$ is the random current term with $\Ma{j_e} = 0$ and $\Ma{|j_e|^2}$ given in \eqref{eq:currcorr}. We assume that $j_e$ is $C^1(\overline{\Omega})$ and hence $\phi$ is $C^2(\overline{\Omega})$ \cite[Ch. 6.3]{Evans:2010:PDE}.

We assume we measure currents flowing out of the domain $\Omega$ at $n$
``electrodes'' by the complex vector with $n$ entries
\begin{equation}
J = \int_{\partial \Omega} dS(x) \begin{bmatrix}
e_1(x)\\\vdots\\e_n(x)
\end{bmatrix}\sigma(x) \nabla \phi(x) \cdot \nu(x),
\end{equation}
where $\nu(x)$ is the unit outward pointing normal to $\partial\Omega$ at
some $x\in \partial\Omega$, and the function $e_i(x)$ are possibly complex $C^1(\partial \Omega)$ ``electrode functions'' defined on $\partial\Omega$. For example
they could be a continuously differentiable approximation of the characteristic function of electrodes at the boundary.

In the following result we prove that the $n \times n$ covariance matrix $\Ma{J
J^*}$ of such measurements can be related to solutions to deterministic auxiliary problems.

\begin{theorem}
	\label{thm:corr}
	The covariance of the vector of measurements $J$ are given by
	\begin{equation}
	[\Ma{JJ^*}]_{ij} = \frac{\kappa}{\pi} \int_\Omega dy\, \Re(\sigma(y)) T(y)
	\nabla u_i(y) \cdot \nabla \overline{u_j(y)} \Delta \omega,
	\end{equation}
	where the functions $u_j$ are solutions to the Dirichlet problems
	\begin{equation}
	\begin{aligned}
	\nabla \cdot [ \sigma \nabla u_i] &= 0,~\text{in}~\Omega\\
	u_i &= e_i,~\text{on}~\partial\Omega.
	\end{aligned}
	\label{eq:aux1}
	\end{equation}
\end{theorem}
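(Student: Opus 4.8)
The plan is to write each scalar measurement $J_i$ as a bulk integral pairing the random source $j_e$ against the auxiliary field $u_i$ from \eqref{eq:aux1}, and then to propagate the covariance \eqref{eq:currcorr} through this linear relation. First I would use $e_i = u_i$ on $\partial\Omega$ to write $J_i = \int_{\partial\Omega} u_i\,\sigma\nabla\phi\cdot\nu\,dS$ and apply Green's second identity to the pair $(u_i,\phi)$ for the operator $\nabla\cdot(\sigma\nabla\,\cdot\,)$. In that identity the boundary contribution $\int_{\partial\Omega}\phi\,\sigma\nabla u_i\cdot\nu\,dS$ vanishes because $\phi=0$ on $\partial\Omega$ by \eqref{eq:phi}, and the volume contribution $\int_\Omega \phi\,\nabla\cdot(\sigma\nabla u_i)\,dy$ vanishes because $u_i$ solves the homogeneous equation \eqref{eq:aux1}; using $\nabla\cdot(\sigma\nabla\phi)=\nabla\cdot j_e$ from \eqref{eq:phi}, this leaves $J_i = \int_\Omega u_i\,\nabla\cdot j_e\,dy$. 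One further integration by parts gives $J_i = -\int_\Omega \nabla u_i\cdot j_e\,dy$, so each measurement is a linear functional of $j_e$ with kernel $\nabla u_i$.

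With $J_i$ in this form I would compute $[\Ma{JJ^*}]_{ij}=\Ma{J_i\overline{J_j}}$ by substituting the two bulk integrals and interchanging expectation with integration, which is legitimate since $j_e\in C^1(\overline{\Omega})$ and $u_i\in C^2(\overline{\Omega})$. This yields a double integral of $\nabla u_i(y)\cdot\Ma{j_e(y)j_e^*(y')}\,\nabla\overline{u_j(y')}$ over $\Omega\times\Omega$. Inserting the covariance \eqref{eq:currcorr}, whose spatial dependence is $\delta(y-y')$ and whose matrix part is $\mathbb{I}$, collapses the double integral onto the diagonal $y=y'$ and contracts the two gradients into $\nabla u_i\cdot\nabla\overline{u_j}$, producing $\frac{\kappa}{\pi}\Delta\omega\int_\Omega T(y)\,\Re(\sigma(y))\,\nabla u_i(y)\cdot\nabla\overline{u_j(y)}\,dy$, which is the asserted formula.

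The step I expect to need the most care is the reduction of $J_i$ to a pure bulk functional: the integration by parts turning $\int_\Omega u_i\,\nabla\cdot j_e\,dy$ into $-\int_\Omega\nabla u_i\cdot j_e\,dy$ produces a surface term $\int_{\partial\Omega}u_i\,(j_e\cdot\nu)\,dS$ that one must argue does not contribute. I would justify this from the modeling setup: since the measurements are taken differentially against a constant background temperature, the effective $T$ --- and hence, through \eqref{eq:currcorr}, the support of the relevant part of $j_e$ --- is concentrated in the heated interior and vanishes near $\partial\Omega$, so the surface term drops. A secondary subtlety is that \eqref{eq:currcorr} is a distributional, delta-correlated covariance, so the manipulations above are properly read as identities for the linear functionals $J_i$ paired against $j_e$; this is exactly what renders the final single integral well defined.
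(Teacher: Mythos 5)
Your covariance-propagation step and your final formula are fine, and your route runs parallel to the paper's: both arguments reduce each measurement to the bulk pairing $J_i=-\int_\Omega\nabla u_i\cdot j_e\,dy$ and then push the delta-correlated covariance \eqref{eq:currcorr} through this linear map. The difference is in how that bulk representation is reached. You apply Green's second identity directly to the pair $(u_i,\phi)$, which cleanly and rigorously yields $J_i=\int_\Omega u_i\,\nabla\cdot j_e\,dy$; the paper instead represents $\phi$ through the Dirichlet Green's function $G(x,y)$, integrates by parts in $y$ at that stage --- where the boundary term carries the factor $G(x,y)|_{y\in\partial\Omega}=0$ and vanishes identically --- and then converts $\int_{\partial\Omega}dS(x)\,e_i\sigma\,\nabla_x G(x,y)\cdot\nu(x)$ into $u_i(y)$ by a second double integration by parts. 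The paper's route therefore never produces a surface integral of $j_e$.

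The genuine gap is your disposal of the surface term $\int_{\partial\Omega}e_i\,(j_e\cdot\nu)\,dS$ created by your last integration by parts. The justification you offer --- that the differential measurement concentrates the ``relevant part'' of $j_e$ in the heated interior --- is not available for the theorem as stated: \cref{thm:corr} concerns $\Ma{JJ^*}$ at a single temperature field $T$, and by \eqref{eq:currcorr} the covariance of $j_e$ is proportional to $T\sigma'$, which is bounded away from zero all the way up to $\partial\Omega$ (the constant background $T_0$ alone guarantees this; the subtraction of the $T_0$ measurement happens only later, in \eqref{eqn:diff_temp}, and does not remove a surface contribution pointwise). Nor can the term be dismissed as contributing nothing to the covariance: keeping it produces a self term in which the three-dimensional $\delta(y-y')$ is integrated over $\partial\Omega\times\partial\Omega$, which is divergent rather than zero, together with ambiguous cross terms with one point on the boundary. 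To close the argument you must either (i) add the unstated modeling hypothesis that $j_e\cdot\nu=0$ on $\partial\Omega$ (or that $j_e$ vanishes in a neighborhood of the boundary), or (ii) follow the paper and perform the integration by parts against the Green's function, whose homogeneous Dirichlet data annihilate the boundary term before it can appear. As written, your proof establishes the formula only under an extra support assumption on $j_e$ that is not part of the theorem.
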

\begin{proof}
	First, note that we can write the solution  to \eqref{eq:phi} as
	\begin{equation}
	\phi(x) = \int_\Omega dy\, G(x,y) \nabla_y \cdot j_e(y),
	\label{eq:greensol}
	\end{equation}
	where $G(x,y)$ is the Green function $G(x,y)$ satisfying the equation
	\begin{equation}
	\begin{aligned}
	\nabla_x \cdot [ \sigma(x) \nabla_x G(x,y) ] &= \delta(x-y), ~ x,y \in
	\Omega\\
	G(x,y) &=0,~x\in \partial\Omega ~\text{or}~y\in \partial\Omega.
	\end{aligned}
	\end{equation}
	To double check \eqref{eq:greensol}, it is clear that $\phi(x) = 0$ for $x \in \partial \Omega$
	because $G(x,y) = 0$ for $x \in \partial \Omega$. Also:
	\[
	\begin{aligned}
	\nabla_x \cdot [ \sigma(x) \nabla_x \phi] &= 
	\int_\Omega dy\, \nabla_x \cdot[ \sigma(x) \nabla_x G(x,y) ] \nabla_y
	\cdot j_e(y)\\
	&= \int_\Omega dy\, \delta(x-y) \nabla_y \cdot j_e(y) \\
	&= \nabla_x \cdot  j_e(x).
	\end{aligned}
	\]

	Now it is  helpful to use integration by parts to get
	\begin{equation}\label{eqn:int_by_parts}
	\begin{aligned}
	\phi(x) &= \int_\Omega dy\, G(x,y) \nabla_y \cdot
	j_e(y)\\
	&= \int_{\partial\Omega} dS(y)\, G(x,y) j_e(y) \cdot \nu(y)
	- \int_\Omega  dy\, \nabla_y G(x,y) \cdot j_e(y)\\
	&= -\int_\Omega  dy\, \nabla_y G(x,y) \cdot j_e(y).
	\end{aligned}
	\end{equation}
	
	Using \eqref{eqn:int_by_parts} and \eqref{eq:currcorr} allows us to compute 
	\begin{equation}
	\begin{aligned}
	\Ma{JJ^*}_{ij} &= \Ma{\int_{\partial\Omega}dS(x)\int_{\partial\Omega}dS(x')
		e_i(x) \sigma(x)  \nabla_x \phi(x) \cdot \nu(x)
		\overline{e_j(x')} \overline{\sigma(x')} \nabla_{x'} \overline{\phi(x')}  \cdot
		\nu(x')}\\
	&=\int_{\partial\Omega}dS(x)\int_{\partial\Omega}dS(x')\int_\Omega
	dy \frac{\kappa}{\pi} \Re(\sigma(y)) T(y) \Delta \omega
	e_i(x) \sigma(x) 
	\overline{e_j(x')} \overline{\sigma(x')} 
	\nu(x)^T \\ & \hspace{4.6cm} \nabla_x \nabla_y G(x,y) 
	\nabla_{x'} \nabla_y \overline{G(x',y)} \nu(x')\\
	&=\frac{\kappa}{\pi} \int_\Omega dy\, \Re(\sigma(y)) T(y) \Delta \omega
	\Mb{\int_{\partial\Omega}dS(x) e_i(x) \sigma(x)  \nabla_x \nabla_y
		G(x,y) \nu(x)}^T \\
	& \hspace{4.5cm}\overline{
		\Mb{\int_{\partial\Omega}dS(x') e_j(x') \sigma(x')  \nabla_{x'} \nabla_y
			G(x',y) \nu(x')} }\\
	&= \frac{\kappa}{\pi} \int_\Omega dy\, \Re(\sigma(y)) T(y) \Delta \omega
	\nabla_y u_i(y) \cdot \nabla_y \overline{u_j(y)},
	\end{aligned}
	\end{equation} 
	where $u_i$ solves the problem \eqref{eq:aux1} and we use that 
	$
	\nabla_x \nabla_y G(x,y)
	$
	is symmetric. The last equality follows by doing integration by parts twice:
	\begin{equation}
	\begin{aligned}
	\int_{\partial\Omega}dS(x) e_i(x) \sigma(x)  \nabla_x 
	G(x,y) \cdot \nu(x) 
	&= \int_\Omega dx \nabla_x \cdot [ \sigma(x) \nabla_x G(x,y) u_i(x) ]
	\\
	&= \int_\Omega dx \nabla_x \cdot [ \sigma(x) \nabla_x G(x,y)] u_i(x) 
	\\&+ \int_\Omega dx \sigma(x) \nabla_x G(x,y) \cdot \nabla_x u_i(x)\\
	&= u_i(y) + \int_\Omega dx \nabla_x \cdot[ \sigma (x) G(x,y) \nabla_x
	u_i(x)] \\&- \int_\Omega dx G(x,y) \nabla_x \cdot [ \sigma(x) \nabla_x
	u_i(x)]\\
	&= u_i(y) + \int_{\partial\Omega} dS(x) G(x,y) \sigma(x) \nabla_x
	u_i(x) \cdot \nu(x)\\
	&= u_i(y).
	\end{aligned}
	\end{equation}
\end{proof}
It may be possible to loosen the regularity assumptions on $j_e$ and $e_i$ and derive a similar result to \cref{thm:corr}. Since the scope of this work is focused on establishing the relation between the stochastic and deterministic problems, we leave this for future work. 

\subsection{Boundary conditions modeling electrodes with insulating gaps}
The setup using Dirichlet boundary conditions \eqref{eq:phi} assumes that $\phi|_{\partial\Omega} = 0$, which would likely be hard to realize in practice because we expect to have a few electrodes connected to the ground with insulating gaps between them. This corresponds to a boundary condition of mixed type: homogeneous Dirichlet on the electrodes and homogeneous Neumann (zero flux) on the gaps between the electrodes. To be more precise, let $\Gamma = \supp e_1 \cup \ldots \cup \supp e_n$ then we replace \eqref{eq:phi} with 
\begin{equation}\label{eq:mixed}
\begin{aligned}
\nabla \cdot [ \sigma \nabla \phi ] &= \nabla \cdot j_e, ~\text{in}~\Omega,\\
\sigma \nabla \phi \cdot \nu &= 0,~ \text{on} ~ \partial\Omega -
\Gamma, \\
\phi &= 0, ~\text{on}~\Gamma.
\end{aligned}
\end{equation}
Then \cref{thm:corr} holds in the same fashion, but we assume that the $u_i$ are solutions to the following mixed boundary problem replacing \eqref{eq:aux1} with
\begin{equation}
\begin{aligned}
\nabla \cdot [ \sigma \nabla u_i] &= 0,~\text{in}~\Omega,\\
\sigma \nabla u_i \cdot \nu &= 0,~\text{on}~\partial\Omega-\Gamma,\\
u_i &= e_i,~\text{on}~\Gamma.
\end{aligned}
\label{eq:aux2}
\end{equation}
The proof follows by noting that integration by parts now yields 
\[ 
\phi(x) = \int_{\partial \Omega - \Gamma} dS(y) G(x,y) j_e(y) \cdot \nu(y) - \int_\Omega dy \nabla_y G(x,y) \cdot j_e(y),
\]
resulting in four terms when $\phi(x)$ is substituted in $\Ma{JJ^*}_{ij}$. One of the terms is similar to the case of the Dirichlet boundary conditions, and all of the others contain integrals over the zero flux part of the boundary. By invoking the zero flux boundary conditions, these terms can be easily seen to disappear, leaving us with the same formula for $\Ma{JJ^*}_{ij}.$

\subsection{Differential temperature measurements}
Utilizing \cref{thm:corr}, we can now relate the differential temperature measurements as described in \cref{sec:intro} to measurements of the internal functional \eqref{eqn:func}. Concretely, we take a set of measurements of the covariance of the currents in a body at temperatures $T = T_0$ and $T = T_0+\delta T(x)$, where $\delta T(x)$ is a prescribed heating pattern. Then the differential temperature measurements give
\begin{equation}\label{eqn:diff_temp}
\Mb{ \Ma{J_{T_0 + \delta T} J_{T_0 + \delta T}^*} - \Ma{J_{T_0}
		J_{T_0}^*} }_{ii} = \frac{\kappa}{\pi} \int_\Omega dx\, \delta T (x) \Delta \omega
\Re(\sigma(x)) |\nabla u_i(x)|^2,
\end{equation}
considering only the diagonal elements of the covariance matrix. As previously noted, only measurements of the diagonal elements are used for our reproduction approach. By taking a sufficiently rich set of heating patterns we get an estimate for the internal functional 
\begin{equation}
H_{ii}(x) = \sigma'(x)|\nabla u_i(x)|^2,~\text{for}~x\in\Omega,~i,j\in\{1,\ldots,n\}.
\label{eqn:intfun}
\end{equation}
For real conductivities ($\sigma=\sigma'$), the internal functional \eqref{eqn:intfun} corresponds to the power dissipated inside the domain.

\begin{remark}
	In our numerical experiments (see \cref{sec:numrec}) we use, for convenience, heating patterns $\delta T(x)$ that are approximate Dirac delta distributions. Other patterns such as cosines and sines could also be used. Using spatially extended patterns may be advantageous in terms of signal to noise ratio.
\end{remark}

\subsection{Rough estimation of thermal noise induced currents}
\label{rem:measure_size}
	The thermal noise induced currents are very small and may limit the application of this approach. To get an idea of the magnitude of the signals that need to be measured to obtain $H_{ii}$ in \eqref{eqn:diff_temp}, we need to distinguish between current measurements with the background temperature $T_0$ and with perturbed temperature  $T_0 + \delta T$. We make rough estimates of these currents in two situations: the first is consistent with the numerical experiments and the second one is consistent with laser welding. 

	\subsubsection*{Conductivities used in the numerical experiments}
	For the background temperature measurements, recall that Boltzmann's constant is on the order of $10^{-23}$ $\textrm{J} \cdot \textrm{K}^{-1}$. For our numerical experiments we chose $\Delta \omega = 10$ kHz, $\Delta z = 0.1$ cm and a domain with area 10 cm$^2$. If the conductivity is about $10^{-3}$ cm$^{-1}$ $\Omega^{-1}$ and $T_0 = 300$ K, then accounting for the $1/\pi$ factor, the variance of the random currents is on the order of $10^{-20}$ A$^2$. To reach this estimate we assumed the squared gradient of the auxiliary fields is constant and equal to $10^{-2}$ cm$^{-2}$. For the differential measurements we may further assume a $\delta  T = 10$ K on area of $(0.2)^2$ cm$^2$. This gives a current variance of the order $10^{-25}$ A$^2$ and a signal to noise ratio of $10^{-5}$. 


	\subsubsection*{Conductivities consistent with welding}
	The conductivity of gold is much higher than what we used in the numerical experiments and is on the order of $4.5\times 10^7$m$^{-1}$ $\Omega^{-1}$. For instance consider a sheet of gold of dimensions  1 cm $\times$  1 cm $\times$ 1 mm and a bandwidth and central frequencies on the order of 100 Hz. For this choice of frequencies, the quasi-static approximation (\cref{sec:quasi}) is not well satisfied. Nevertheless, if $T_0 = 300$ K  the variance of the random currents is on the order of $10^{-14} \textrm{A}^2$. For the differential measurements we may further assume a $\Delta T = 1300$ K (which is close to the melting point of gold) on an area of $(0.1)^2$ mm$^2$. This gives a current variance of the order $10^{-17}$ A$^2$ and a signal to noise ratio of $10^{-3}$. 
	

\subsection{The inverse problem for real conductivities}
The inverse problem for a real conductivity $\sigma=\sigma'$ consists of the measurement equation \eqref{eqn:intfun} and the auxiliary problem \eqref{eq:aux1}. To be more precise, we seek to recover $u_i$ and $\sigma$ given $H_{ii}$ and $e_i$ from the real non-linear system of partial differential equations, for $i=1,...,n$,
\begin{equation}\label{eqn:system_cont}
\begin{aligned}
\nabla \cdot \Mb{\sigma \nabla u_i} &= 0,  \quad  x\in \Omega,\\ 
u_i-e_i &= 0,  \quad  x\in \partial \Omega,\\ 
H_{ii} - \sigma |\nabla u_i|^2 &= 0,  \quad x\in \Omega.
\end{aligned}
\end{equation}
We call the model associated with measurements $H_{ii}$ given by the expectation in \cref{thm:corr} the \textit{deterministic model} and the model associated with measurements given by realizations of randomly induced currents the \textit{stochastic model}. 

\subsection{The inverse problem for complex conductivities}
We write the complex problem by separating the real and complex parts of \eqref{eqn:intfun} and \eqref{eq:aux1}. To avoid confusion with the complex number $i$ we use $j$ to denote experiments for the complex conductivity problem. We use a single prime (resp. double prime) to denote the real part (resp. imaginary part) of a complex quantity, e.g. $\sigma = \sigma'+i\sigma''$, $u_j = u_j'+iu_j''$, and $e_j=e_j'+ie_j''$. Then the problem is to find $\sigma'$, $\sigma''$, $u_j'$ and $u_j''$ given $H_{jj}$, $e_j'$, and $e_j''$ from the non-linear system of partial differential equations, for $j=1,..,n$, 
\begin{equation}\label{eqn:system_cont_complex}
\begin{aligned}
\nabla \cdot \Mb{\sigma '\nabla u_j'}-\nabla \cdot \Mb{\sigma ''\nabla u_j''} &= 0,  \quad x\in \Omega,\\ 
\nabla \cdot \Mb{\sigma '\nabla u_j''}+\nabla \cdot \Mb{\sigma ''\nabla u_j'} &= 0,  \quad x\in \Omega,\\ 
u_j'-e_j'&=0, \quad  x\in \partial \Omega, \\
u_j''-e_j''&=0, \quad  x\in \partial \Omega, \\
H_{jj}-\sigma'(|\nabla u_j'|^2+|\nabla u_j''|^2) &= 0, \quad x\in\Omega.
\end{aligned}
\end{equation}
An equivalent formulation of \eqref{eqn:system_cont_complex} can be found using the conjugates of $u_j$ and the $e_j$ instead of their real and imaginary components separately. Both the system \eqref{eqn:system_cont} and \eqref{eqn:system_cont_complex} can be modified to instead use the experimental boundary conditions \eqref{eq:aux2}. 

\begin{remark}
The non-linear system of equations that would be obtained by allowing the conductivity to be complex in ultrasound modulated EIT (see e.g. \cite{Bal:2013:HIP}) is similar to \eqref{eqn:system_cont_complex} with two real measurement equations per boundary condition instead of a single one, i.e. for $x\in\Omega$:
\[
	\begin{aligned}
	H_{jj}'-\sigma'(|\nabla u_j'|^2+|\nabla u_j''|^2) &= 0,~\text{and}\\
	H_{jj}''-\sigma''(|\nabla u_j'|^2+|\nabla u_j''|^2) &= 0.
	\end{aligned}
\]
We did not consider this problem because the form of the measurements we consider \eqref{eqn:system_cont_complex} is a direct result of using thermal induced random currents, see \cref{thm:corr}.
\end{remark}

\section{Linearized problem}\label{sec:stability}
Before attempting to reconstruct conductivities numerically, we analyze the linearizations of the real \eqref{eqn:system_cont} and complex \eqref{eqn:system_cont_complex} conductivity problems. Our goal is to find sufficient conditions for injectivity of the linearized problems, or in other words, if they admit a unique solution. Our analysis is based on \cite{Bal:2014:HIP}, which includes a proof that the linearized real conductivity problem is elliptic in the sense of Douglis-Nirenberg under certain boundary conditions \cite{Douglis:1955:IEE}. This was established in \cite{Kuchment:2012:SIP} for ultrasound modulated EIT and generalized to other hybrid inverse problem in \cite{Bal:2014:HIP}.

The linearization of the real conductivity problem \eqref{eqn:system_cont} around the solution $(u_i,\sigma)$ in the variables $(\delta u_i, \delta \sigma)$ for $i=1,..,n$ is given by 
\begin{equation}\label{eqn:linreal}
\begin{aligned}
\nabla \cdot \Mb{\sigma \nabla \delta u_i} + \nabla \cdot \Mb{\delta \sigma \nabla u_i }&= 0,  \quad  x\in \Omega,\\ 
\delta u_i &= 0,  \quad x\in \partial \Omega,\\ 
\delta H_{ii} - \delta \sigma|\nabla u_i|^2-2\sigma\nabla \delta u_i \cdot \nabla u_i &= 0,  \quad  x\in \Omega.
\end{aligned}
\end{equation}
The linearization of the complex conductivity problem \eqref{eqn:system_cont_complex} around the solution $(u_j',u_j'',\sigma',\sigma'')$ in the variables $(\delta u_j',\delta u_j'', \delta \sigma', \delta \sigma'')$ for $j=1,..,n$ is given by 
\begin{equation}
\begin{aligned}\label{eqn:lincom}
\nabla \cdot \Mb{\sigma' \nabla \delta u_j'} + \nabla \cdot \Mb{\delta \sigma' \nabla u_j' } 
-\nabla \cdot \Mb{\sigma'' \nabla \delta u_j''} - \nabla \cdot \Mb{\delta \sigma'' \nabla u_j'' }&= 0,  \quad  x\in \Omega,\\ 
\nabla \cdot \Mb{\sigma' \nabla \delta u_j''} + \nabla \cdot \Mb{\delta \sigma' \nabla u_j'' } 
+\nabla \cdot \Mb{\sigma'' \nabla \delta u_j'} + \nabla \cdot \Mb{\delta \sigma'' \nabla u_j' }&= 0 , \quad  x\in \Omega,\\ 
\delta u_j' &= 0,  \quad  x\in \partial \Omega,\\ 
\delta u_j'' &= 0,  \quad  x\in \partial \Omega,\\ 
\delta H_{jj} - \delta \sigma'(|\nabla u_j'|^2+|\nabla u_j''|^2)-2\sigma'\nabla \delta u_j' \cdot \nabla u_j'-2\sigma'\nabla \delta u_j' \cdot \nabla u_j'&=0,\quad x \in \Omega.
\end{aligned}
\end{equation}

In \cite{Bal:2014:HIP}, it is established \eqref{eqn:linreal} is elliptic using two boundary conditions if the gradients of the associated fields are nowhere orthogonal or parallel. We do not attempt to analyze how this condition might be satisfied in the case of the mixed boundary conditions \eqref{eq:aux2}. We note that this establishes that \eqref{eqn:linreal} is not elliptic in the case of one experiment with mixed boundary conditions. Instead, we attempt to analyze the problem numerically by estimating the conditioning of the symbol of the linearized problem. For the case of complex conductivity \eqref{eqn:lincom}, we give a sufficient condition in \cref{lem:sym} for ellipticity; however, we do not give boundary conditions that guarantee this is satisfied, nor do we prove that such boundary conditions exist. We note for elliptic linear systems, it is possible to obtain stability estimates by augmenting the  system with boundary conditions satisfying the Lopatinskii condition, following \cite{Bal:2014:HIP}.

To establish if \eqref{eqn:linreal} and \eqref{eqn:lincom} are elliptic, we first compute the principal symbol of their associated matrix-valued differential operators  $\mathcal{A}(x,D)$ for $x \in \Omega$, where $D=(\partial_{x_1},...,\partial_{x_n})$. Since these are linearized systems, the entries $\mathcal{A}_{ij}(x,D)$ are polynomials in $D$ for each $x \in \Omega$. We associate each row of $\mathcal{A}$ with an integer $s_i$ and each column with an integer $t_j$, chosen such that the maximum degree of each  polynomial $\mathcal{A}_{ij}(x,D)$ is $s_i+t_j$. The principal component $\mathcal{A}_0(x,D)$ is obtained from $\mathcal{A}(x,D)$ by keeping only the terms in $\mathcal{A}_{ij}(x,D)$ with order exactly $s_i+t_j$. If the principal symbol $\mathcal{A}_0(x,\xi)$ is injective for all $\xi \neq 0$, then the problem is elliptic in the Douglis-Nirenberg sense at $x\in \Omega$. 

\subsection{Injectivity of the linearized real problem} \label{sec:real_prob}
 Letting $F_i =\nabla u_i$ the principal symbol of the real problem \eqref{eqn:linreal} is the $2n \times (n+1)$ matrix 
\begin{equation}\label{eqn:real_sym}
\mathcal{A}_0(x,\xi) = \begin{bmatrix}
|F_1|^2 & 2 \sigma F_1 \cdot i \xi &  \cdots  & 0\\
F_1 \cdot i \xi  & -\sigma |\xi|^2 &   \cdots  & 0 \\
\vdots & \vdots &  \ddots &  \vdots \\
|F_n|^2  & 0 & \cdots & 2\sigma F_n \cdot i \xi\\
F_n \cdot i\xi  & 0 & \cdots & -\sigma |\xi|^2
\end{bmatrix},
\end{equation}
for $i =1,...,n$ and where $\sigma = \sigma'$. The system is in Douglis-Nirenberg form where the row weights $s_i$ are given by the $2n$ vector $(0,1,0,1,\ldots,0,1)$ and the column weights $t_j$ are given by the $n+1$ vector $(0,1,1,\ldots,1)$. As noted previously, this symbol is shown to be injective in two dimensions using two boundary conditions such that $F_1$ and $F_2$ are nowhere orthogonal, or parallel \cite{Bal:2014:HIP}.  

We consider the discretized problem on the square $[0,10]^2$ using a uniform $200 \times 200$ grid. The conductivity used can be seen in \cref{fig:det_real} (a). We numerically solve \eqref{eq:aux2} to calculate the fields $u_i$ for $i=1,..,n$. The Dirichlet boundary conditions are defined on the set 
\begin{equation}\label{eqn:gamma}
\begin{aligned}
\Gamma = \big(([0,4.5] \cup [5.5,10]) \times \{0,10\}\big) \cup \big(\{0,10\} \times ([0,4.5] \cup [5.5,10])\big).
\end{aligned}
\end{equation}
For $x\in \Gamma$ the boundary conditions are of the form
\begin{equation}\label{eqn:bc}
\begin{aligned}
g_n &= 5\sin(\theta)\M{\frac{r}{10}}^n,\\
h_n &= 5\cos(\theta)\M{\frac{r}{10}}^n,
\end{aligned}
\end{equation}
where $(r,\theta)$ is the polar representation of the nodes in $\Gamma$. We enforce the no flux boundary condition on the gaps, i.e. $x \in \partial \Omega - \Gamma$. 

To establish the ellipticity of the operator, it is sufficient to show that it is injective for all $\xi$ such that $|\xi|=1$. We check this condition numerically for $\xi \in \Xi$, where $\Xi$ is a set of 100 vectors uniformly spaced on the unit circle (since $\xi$ is two-dimensional in our simulations). At each point $x$ in the grid we use to discretize $\Omega$, we compute the maximum condition number of the symbol along directions $\xi \in \Xi$, i.e.
\begin{equation}\label{eqn:condition}
\max_{\xi \in \Xi}\Mb{\frac{\sigma_{\max}(\mathcal{A}_0(x,\xi))}{\sigma_{\min}(\mathcal{A}_0(x,\xi))}},
\end{equation}
where $\sigma_{\min}(A)$ (resp. $\sigma_{\max}(A)$) is the smallest (resp. largest) singular value of a matrix $A$.
The maximum condition number \eqref{eqn:condition} of the symbol \eqref{eqn:real_sym} can be seen in \cref{fig:cond_gap}. These numerical results are in line with the previously established theory: the maximum condition number is higher under one experiment than under two experiments. Even under one experiment, the conditioning is still reasonable for most applications. Indeed, the conductivity can still be ``reasonably'' recovered using one experiment but with worsened numerical artifacts. 

\begin{figure}[h!]
	\centering
	\begin{tabular}{cc}
			\includegraphics[width=55mm]{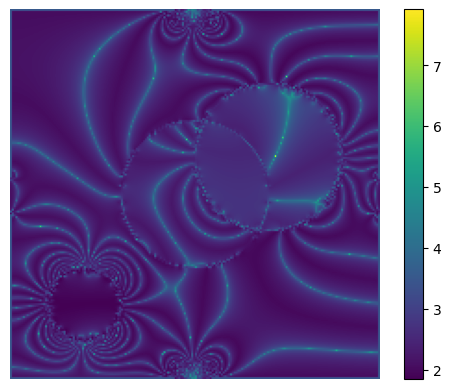} & \includegraphics[width=55mm]{./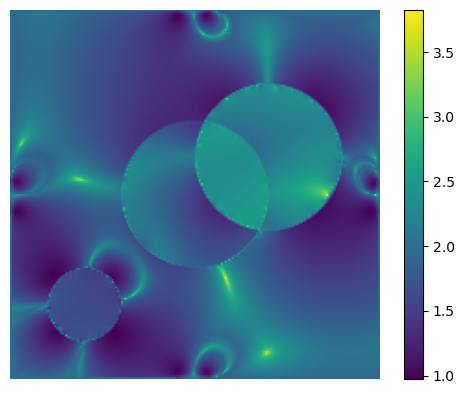} \\
			(a) One boundary condition & (b) Two boundary conditions
 	\end{tabular}
	\caption{Maximum condition number \eqref{eqn:condition} of the symbol of the linearized problem \eqref{eqn:linreal} on the square domain $[0,10]^2$ with $\log10$ scaling. The left image (a) is the conditioning of the symbol with one boundary condition given by $g_1$ in \eqref{eqn:bc}. The right image (b) is the conditioning of the symbol with two boundary conditions given by $g_1$ and $h_1$ in \eqref{eqn:bc}.} \label{fig:cond_gap}
\end{figure}

\subsection{Injectivity of the linearized complex problem}\label{sec:comp_prob} Letting $F_j' = \nabla u_j'$, $F_j'' = \nabla u_j''$, and $F_j = F_j'+iF_j''$, the symbol for the complex linear system \eqref{eqn:lincom} is the $3n\times (2+2n)$ matrix 
\begin{equation}\label{eqn:comp_sym}
\widetilde{\mathcal{A}}_0(x,\xi) = \begin{bmatrix}
|F_1|^2 & 0 & 2 \sigma' F_1' \cdot i \xi &  2\sigma' F_1'' \cdot i \xi & \cdots & 0 &0\\
F_1' \cdot i \xi  & -F_1'' \cdot i\xi &-\sigma'|\xi|^2 & \sigma''|\xi|^2 &\cdots & 0 & 0 \\
F_1'' \cdot i \xi  & F_1' \cdot i\xi &-\sigma''|\xi|^2 & -\sigma'|\xi|^2 &\cdots & 0 & 0 \\
\vdots & \vdots & \vdots & \ddots & \ddots & \vdots & \vdots \\
|F_n|^2 & 0 & \cdots & 0 & \cdots &  2 \sigma' F_n' \cdot i \xi &  2\sigma' F_n'' \cdot i \xi\\
F_n' \cdot i \xi  & -F_n'' \cdot i\xi &\cdots & 0 & \cdots & -\sigma'|\xi|^2 & \sigma''|\xi|^2 \\
F_n'' \cdot i \xi  & F_n' \cdot i\xi &\cdots & 0 & \cdots &-\sigma''|\xi|^2 & -\sigma'|\xi|^2
\end{bmatrix},
\end{equation}
for $j=1,...,n$. The system is in Douglis-Nirenberg form where the row weights $s_i$ are given by the $3n$ vector $(0,1,1,0,1,1,\ldots, 0,1,1)$ and the column weights $t_j$ are given by the $2+2n$ vector $(0,0,1,1,\ldots,1,1)$.

\begin{lemma}\label{lem:sym}
	The symbol of the system \eqref{eqn:lincom}, $\widetilde{\mathcal{A}}_0(x,\xi)$, with $n \geq 2$ is injective at $x\in \Omega$ if there exists two experiments $i$ and $j$ such that for all $\xi \neq 0$,
	\begin{equation}\label{eqn:cond_det}
	|F_i|^2 |F_j \cdot \xi|^2 \neq 
	|F_j|^2 |F_i \cdot \xi|^2,
	\end{equation}
	and $\sigma'(x),\sigma''(x)>0$. 
\end{lemma}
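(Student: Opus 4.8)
The plan is to show that for every real $\xi\neq 0$ the kernel of $\widetilde{\mathcal{A}}_0(x,\xi)$ is trivial. I write the amplitude vector as $(a,b,p_1,q_1,\dots,p_n,q_n)\in\complex^{2+2n}$, where $a,b$ are the amplitudes of $(\delta\sigma',\delta\sigma'')$ and $(p_k,q_k)$ those of $(\delta u_k',\delta u_k'')$. The structural feature of \eqref{eqn:comp_sym} that I would exploit is that $p_k,q_k$ occur only in the three rows of experiment $k$, while $a,b$ are shared; so I first use the two ``PDE rows'' of each experiment to eliminate $p_k,q_k$ in favour of $a,b$, and then substitute into the ``measurement rows''.

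First I would pass to complex combinations. Writing $F_k\cdot\xi=F_k'\cdot\xi+iF_k''\cdot\xi$ (a complex scalar, since $\xi$ is real) and $\sigma=\sigma'+i\sigma''$, I add and subtract the two PDE rows of experiment $k$. A direct computation shows that (real PDE row)$\,+\,i\,$(imaginary PDE row) collapses to
\[
i(F_k\cdot\xi)\,\delta\sigma - |\xi|^2\sigma\,w_k = 0,\qquad \delta\sigma:=a+ib,\ w_k:=p_k+iq_k,
\]
while (real PDE row)$\,-\,i\,$(imaginary PDE row) collapses to
\[
i\,\overline{F_k\cdot\xi}\,\tau - |\xi|^2\overline{\sigma}\,v_k = 0,\qquad \tau:=a-ib,\ v_k:=p_k-iq_k.
\]
Since $|\sigma|\neq 0$ these solve uniquely, giving $w_k=i(F_k\cdot\xi)\,\delta\sigma/(|\xi|^2\sigma)$ and $v_k=i\,\overline{F_k\cdot\xi}\,\tau/(|\xi|^2\overline\sigma)$, so every field amplitude is expressed through the two scalars $\delta\sigma,\tau$.

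Next I would substitute these into the measurement row of experiment $k$, which after using $F_k'\cdot\xi\,p_k+F_k''\cdot\xi\,q_k=\tfrac12\big(\overline{F_k\cdot\xi}\,w_k+(F_k\cdot\xi)\,v_k\big)$ reads $|F_k|^2 a+i\sigma'\big(\overline{F_k\cdot\xi}\,w_k+(F_k\cdot\xi)\,v_k\big)=0$. With $a=(\delta\sigma+\tau)/2$ and the formulas for $w_k,v_k$, each measurement row becomes a single equation in $(\delta\sigma,\tau)$,
\[
\delta\sigma\Big[\,|F_k|^2-\frac{2\sigma'|F_k\cdot\xi|^2}{|\xi|^2\sigma}\,\Big]+\tau\Big[\,|F_k|^2-\frac{2\sigma'|F_k\cdot\xi|^2}{|\xi|^2\overline\sigma}\,\Big]=0.
\]
Taking the two experiments $i,j$ supplied by the hypothesis gives a $2\times2$ homogeneous system for $(\delta\sigma,\tau)$. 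I would compute its determinant and check that all $|F_k|^4$ and mixed terms cancel, leaving
\[
\frac{-4i\,\sigma'\sigma''}{|\xi|^2|\sigma|^2}\big(|F_i|^2|F_j\cdot\xi|^2-|F_j|^2|F_i\cdot\xi|^2\big).
\]
Under $\sigma',\sigma''>0$ and condition \eqref{eqn:cond_det} this is nonzero, forcing $\delta\sigma=\tau=0$, hence $a=b=0$; the formulas for $w_k,v_k$ then give $p_k=q_k=0$ for \emph{all} $k$, so the amplitude vector vanishes and $\widetilde{\mathcal{A}}_0(x,\xi)$ is injective.

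The main obstacle I expect is not a single estimate but choosing the right change of variables: working directly with the real/imaginary blocks produces an unwieldy block determinant, whereas the complex pairing $(\delta\sigma,\tau,w_k,v_k)$ makes the elimination transparent and is precisely what exposes the factor $\sigma'\sigma''\big(|F_i|^2|F_j\cdot\xi|^2-|F_j|^2|F_i\cdot\xi|^2\big)$. The rest is careful bookkeeping: verifying the two sign-conjugate collapses of the PDE rows and confirming that the cross terms in the $2\times2$ determinant cancel so that only the antisymmetric combination in \eqref{eqn:cond_det} survives.
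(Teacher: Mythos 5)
Your argument is correct and is essentially the paper's proof: both eliminate the field amplitudes using the invertible PDE blocks (i.e.\ a Schur complement with respect to the block-diagonal $2n\times 2n$ block) and reduce to a $2\times 2$ homogeneous system in the conductivity amplitudes whose determinant is, up to manifestly nonzero factors, $\sigma'\sigma''\left(|F_i|^2|F_j\cdot\xi|^2-|F_j|^2|F_i\cdot\xi|^2\right)$, exactly the quantity in \eqref{eqn:cond_det}. Your complex change of variables $(\delta\sigma,\tau,w_k,v_k)$ merely diagonalizes the $2\times 2$ blocks that the paper inverts directly, so it is a clean presentational simplification rather than a genuinely different route.
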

\begin{proof}
	If $n=2$, we write the system such that the measurement equations are the first two rows 
\[
	\begin{bmatrix}
|F_1|^2 & 0 & 2 \sigma' F_1' \cdot i \xi &  2\sigma' F_1'' \cdot i \xi  & 0 &0\\
|F_2|^2 & 0 & 0 & 0 &  2 \sigma' F_2' \cdot i \xi &  2\sigma' F_2'' \cdot i \xi\\
F_1' \cdot i \xi  & -F_1'' \cdot i\xi &-\sigma'|\xi|^2 & \sigma''|\xi|^2  & 0 & 0 \\
F_1'' \cdot i \xi  & F_1' \cdot i\xi & -\sigma''|\xi|^2 & -\sigma'|\xi|^2 & 0 & 0 \\
F_2' \cdot i \xi  & -F_2'' \cdot i\xi &0 & 0 & -\sigma'|\xi|^2 & \sigma''|\xi|^2 \\
F_2'' \cdot i \xi  & F_2' \cdot i\xi &0 & 0 & -\sigma''|\xi|^2 & -\sigma'|\xi|^2
\end{bmatrix}.
\]
We consider this as a block matrix with the top left block being $2 \times 2$, and then the bottom right block being $4 \times 4$. The bottom right matrix is block diagonal with invertible diagonal $2\times 2$ blocks, and we use this to compute the Schur complement 
\[
\begin{bmatrix}
|F_1|^2+\frac{2(\sigma')^2}{(|\xi|\sigma')^2+(|\xi|\sigma'')^2}\M{(F_1' \cdot i \xi)^2 + (F_1'' \cdot i \xi)^2} & \frac{2\sigma''\sigma'}{(|\xi|\sigma')^2 + (|\xi|\sigma'')^2}\M{(F_1' \cdot i \xi)^2 + (F_1'' \cdot i \xi)^2} \\ |F_2|^2+\frac{2(\sigma')^2}{(|\xi|\sigma')^2+(|\xi|\sigma'')^2}\M{(F_2' \cdot i \xi)^2 + (F_2'' \cdot i \xi)^2} & \frac{2\sigma''\sigma'}{(|\xi|\sigma')^2 + (|\xi|\sigma'')^2}\M{(F_2' \cdot i \xi)^2 + (F_2'' \cdot i \xi)^2} 
\end{bmatrix}.
\]
The determinant of the Schur complement is then 
\begin{equation}\label{eqn:schur_det}
\frac{2\sigma''\sigma'}{(\sigma')^2 + (\sigma'')^2} \M{\frac{|F_1|^2}{|\xi|^2}\Mb{(F_2' \cdot i \xi)^2 + (F_2'' \cdot i \xi)^2} -
\frac{|F_2|^2}{|\xi|^2}\Mb{(F_1' \cdot i \xi)^2 + (F_1'' \cdot i \xi)^2}},
\end{equation}
which gives the desired result. 

For $n>2$, without lost of generality we let $i=1$ and $j=2$ in \eqref{eqn:cond_det}. We proceed by using the same approach considering the Schur complement of the largest square sub-matrix obtained by deleting the measurement terms of all experiments $i>2$, that is, the $(2+2n) \times (2+2n)$ matrix,
\[
\begin{bmatrix}
|F_1|^2 & 0 & 2 \sigma' F_1' \cdot i \xi &  2\sigma' F_1'' \cdot i \xi  & 0 &0 & \cdots &0 & 0\\
|F_2|^2 & 0 & 0 & 0 &  2 \sigma' F_2' \cdot i \xi &  2\sigma' F_2'' \cdot i \xi &\cdots &0 & 0\\
F_1' \cdot i \xi  & -F_1'' \cdot i\xi &-\sigma'|\xi|^2 & \sigma''|\xi|^2  & 0 & 0 & \cdots& 0 & 0\\
F_1'' \cdot i \xi  & F_1' \cdot i\xi & -\sigma''|\xi|^2 & -\sigma'|\xi|^2 & 0 & 0 & \cdots& 0& 0\\
F_2' \cdot i \xi  & -F_2'' \cdot i\xi &0 & 0 & -\sigma'|\xi|^2 & \sigma''|\xi|^2 & \cdots &0 &0\\
F_2'' \cdot i \xi  & F_2' \cdot i\xi &0 & 0 & -\sigma''|\xi|^2 & -\sigma'|\xi|^2 & \cdots &0 &0\\
\vdots & \vdots & \vdots & \vdots &\ddots & \ddots & \ddots & \vdots& \vdots\\
F_n' \cdot i \xi  & -F_n'' \cdot i\xi &0 & 0 & 0 &0 & \cdots &  -\sigma'|\xi|^2 & \sigma''|\xi|^2 \\
F_n'' \cdot i \xi  & F_n' \cdot i\xi &0 & 0 & 0 &0 & \cdots &  -\sigma''|\xi|^2 & -\sigma'|\xi|^2 
\end{bmatrix}.
\]
We consider the top left $2 \times 2$ matrix as a block, and the bottom right $2n \times 2n$ matrix as a block. The bottom right matrix is still block diagonal with invertible $2\times 2$ blocks. The determinant of the Schur complement is given by \eqref{eqn:schur_det}, and the determinant of the sub-matrix is given by 
\begin{equation}\label{eqn:full_det}
2\sigma''\sigma'|\xi|^{4n-2}|\sigma|^{2(n-1)}\M{|F_1|^2|F_2 \cdot \xi|^2 -|F_2|^2|F_1 \cdot \xi|^2},
\end{equation}
so by supposition, the sub-matrix is invertible. Since one of the largest square sub-matrices of the symbol is invertible, the symbol is injective. 
\end{proof}

We do not present a method for finding boundary conditions such that the condition in \cref{lem:sym} is satisfied, nor do we know if such boundary conditions exist for all possible $\sigma'(x),\sigma''(x)>0$. If $F_i$ and $F_j$ are unit length then the condition here resembles that in \cite{Bal:2014:HIP}, giving that the fields cannot be orthogonal or parallel. We note the following corollary, which is limited to the case where $n=2$. 
\begin{corollary}\label{cor:zero}
		The symbol of the system \eqref{eqn:lincom}, $\widetilde{\mathcal{A}}_0(x,\xi)$, with $n=2$ is not injective at $x\in \Omega$ if $\sigma'(x) = 0$ or $\sigma''(x)=0$.
\end{corollary}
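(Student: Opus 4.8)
The plan is to reduce the claim to the vanishing of a single determinant. For $n=2$ the symbol $\widetilde{\mathcal{A}}_0(x,\xi)$ is a square $6\times 6$ matrix, since it has $3n=6$ rows and $2+2n=6$ columns. Hence, for each fixed $\xi\neq0$, injectivity of $\widetilde{\mathcal{A}}_0(x,\xi)$ is equivalent to its invertibility, which in turn is equivalent to $\det\widetilde{\mathcal{A}}_0(x,\xi)\neq0$. It therefore suffices to show that this determinant vanishes (for every $\xi\neq0$) whenever $\sigma'(x)=0$ or $\sigma''(x)=0$, so that the symbol fails to be injective at $x$.

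The key observation is that for $n=2$ the determinant has already been computed in the proof of \cref{lem:sym}: there the ``largest square sub-matrix'' is simply the entire symbol, so its determinant is exactly \eqref{eqn:full_det}, namely $2\sigma'\sigma''|\xi|^{6}|\sigma|^{2}\bigl(|F_1|^2|F_2\cdot\xi|^2-|F_2|^2|F_1\cdot\xi|^2\bigr)$. This expression carries the explicit scalar factor $\sigma'\sigma''$, and so it is identically zero in $\xi$ as soon as one of $\sigma'$, $\sigma''$ vanishes. This immediately yields the corollary on the region where the formula applies.

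The one point requiring care, and the main (minor) obstacle, is that the Schur-complement derivation of \eqref{eqn:full_det} assumed the bottom-right diagonal $2\times2$ blocks to be invertible; each such block has determinant $|\sigma|^2|\xi|^4$, so the computation is valid precisely when $|\sigma|^2=(\sigma')^2+(\sigma'')^2\neq0$. If exactly one of $\sigma'$, $\sigma''$ is zero this condition still holds, so \eqref{eqn:full_det} applies verbatim and vanishes. To cover the remaining degenerate case $\sigma'=\sigma''=0$ uniformly, I would note that $\det\widetilde{\mathcal{A}}_0(x,\xi)$ is a polynomial in $(\sigma',\sigma'')$ that coincides with the polynomial \eqref{eqn:full_det} on the dense set $\{(\sigma',\sigma'')\neq(0,0)\}$, and hence the two agree everywhere; the factor $\sigma'\sigma''$ then forces the determinant to vanish on all of $\{\sigma'=0\}\cup\{\sigma''=0\}$. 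As an alternative for the case $\sigma'=\sigma''=0$, one can read the conclusion directly off the matrix: every entry involving $\sigma$ drops out, the four columns associated with the $\delta u_j'$ and $\delta u_j''$ variables become identically zero, and these columns exhibit an explicit nontrivial kernel, so the symbol is manifestly non-injective.
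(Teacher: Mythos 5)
Your proposal is correct and takes essentially the same route as the paper: both arguments rest on the observation that the (Schur-complement) determinant of the square $6\times 6$ symbol carries an explicit factor $\sigma'\sigma''$, so it vanishes identically in $\xi$ when either part of the conductivity is zero, and non-invertibility of a square matrix gives non-injectivity. Your treatment is in fact slightly more careful than the paper's one-line proof, since you note that the Schur-complement formula \eqref{eqn:schur_det} presupposes $|\sigma|\neq 0$ and you separately dispose of the degenerate case $\sigma'=\sigma''=0$ (where four columns of the symbol vanish outright), a case the paper's argument formally leaves as $0/0$.
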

\begin{proof}
	This follows by observing that the determinant of the Schur complement \eqref{eqn:schur_det} is zero if $\sigma'(x)=0$ or $\sigma''(x)=0$. 
\end{proof}
Showing that a system is not elliptic for all $n$ using determinants is challenging because we need to ensure that the determinants of all maximal square sub-matrices are zero. However this approach is sufficient for the case $n=2$ and we expect \cref{cor:zero} to hold with more experiments. 

Numerically, we have found that the complex reconstruction is challenging under many combinations of boundary conditions. This can be expected from numerically computing the maximum condition number \eqref{eqn:condition} as we illustrate in the numerical experiment appearing in \cref{fig:sym_complex} and that we describe next.

\begin{figure}[h!]
	\centering 
	\begin{tabular}{ccc}
		\includegraphics[width=45mm]{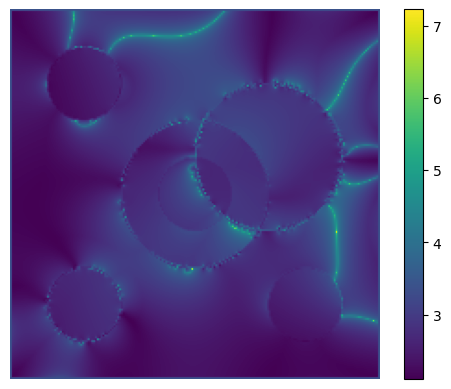} &
		\includegraphics[width=45mm]{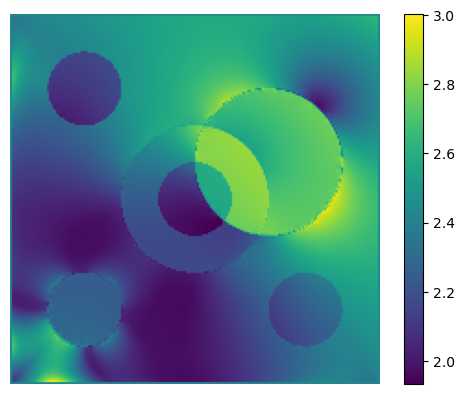} & \includegraphics[width=45mm]{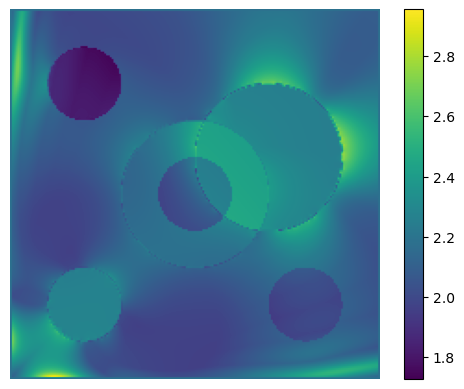} \\
		(a) Two measurements  & (b) Three measurements & (c) Four measurements 
	\end{tabular}
	\caption{The maximum condition number \eqref{eqn:condition} of the symbol of the complex linearized problem \eqref{eqn:lincom} on the discretized square domain with $\log10$ scaling. The images from left to right are given by two, three, and four boundary conditions in the form of \cref{eqn:bc_com}.}
	\label{fig:sym_complex}
\end{figure}

The ground truth conductivity can be seen in \cref{fig:complex_rep} with the real part in (a) and the imaginary part in (b). We only consider the linearized complex problem \eqref{eqn:lincom} with Dirichlet boundary conditions. The boundary conditions are of the form 
\begin{equation}\label{eqn:bc_com}
\begin{aligned}
\tilde{g}_n &= g_n + \frac{i}{2}h_n,\\
\tilde{h}_n &=  h_n + \frac{i}{2}g_n,
\end{aligned}
\end{equation}
with $h$ and $g$ defined in \eqref{eqn:bc}. The scaling of the imaginary part by $1/2$ is to match the imaginary part of the background conductivity. In \cref{fig:sym_complex}, we can see the numerical condition of the symbol for two, three, and four boundary conditions. We begin this experiment with $n=2$ since, with one measurement, the system is underdetermined. The boundary conditions for $n=2$ are $\tilde{g}_1, \tilde{h}_1$, for $n=3$ are  $\tilde{g}_{1}, \tilde{g}_{2}, \tilde{h}_1$, and for $n=4$ are  $\tilde{g}_{1}, \tilde{g}_{2}, \tilde{h}_{1}, \tilde{h}_{2}.$ We use the same domain and grid as in the real case.

The maximum condition number \eqref{eqn:condition} improves significantly by moving from two to three measurements, but the improvement from three to four is modest. The areas with high condition number for two boundary conditions (\cref{fig:sym_complex} (a)) suggest the problem is not elliptic. With more boundary conditions (\cref{fig:sym_complex} (b) and (c)), the areas where the conditioning is high match up with the reconstruction artifacts for the complex case (\cref{fig:complex_rep}).

\section{Numerical reconstructions}
\label{sec:numrec}
The following numerical reconstructions use values consistent with the quasi-static approximation, i.e., values such that $\omega \mu |\sigma|L^2 \ll 1$. In particular we let $L = 10$ cm and $\sigma' \in [1/3,2]$ cm$^{-1}$ $k\Omega^{-1}$. In the case of non-zero complex conductivity we let $\omega = 2\pi10$ kHz and $\sigma'' = \omega \epsilon' \in [1/2,1]$ cm$^{-1}$ $k\Omega^{-1}$. Our choice of parameters is near those in human tissues and satisfies the quasi-static approximation, see e.g. \cite{Borcea:2002:EIT}.
The examples we consider assume a thin plate that is homogeneous in the $z$ direction with thickness $\Delta z = 0.1$cm. If we consider $\Omega \subset \real^2$, then multiplying the measurements by $\Delta z$ corresponds to the results in \cref{sec:det}. 

\subsection{Discrete model}\label{sec:disc_model}
We discretize the system \eqref{eqn:system_cont} on a square domain $\Omega = [0,10]^2$ using a uniform grid with $n^2$ nodes. We denote by $N$ the set of nodes  indexed with their integer coordinates $(i,j)$ and the set of edges by $E \subset N\times N$. The nodes are partitioned into interior nodes $I$ and boundary nodes $B$, which are the nodes that are on the boundary $\partial \Omega$. We use the forward difference operator $D \in \real^{|N| \times |E|}$ defined such that
\[
D = \begin{bmatrix}
D_1 \\ D_2
\end{bmatrix}.
\]
Here $D_1$ (resp. $D_2$) is the horizontal (resp. vertical) first order difference operator. Given a function $\psi$ defined on the nodes $N$, the horizontal and vertical difference operators are defined by
\[
\begin{aligned}
(D_1\psi)(i,j) &= \frac{\psi(i+1,j)-\psi(i,j)}{\Delta x}, \\
(D_2\psi)(i,j) &= \frac{\psi(i,j+1)-\psi(i,j)}{\Delta y},
\end{aligned}
\]
where $\Delta x$ and $\Delta y$ are the horizontal and vertical discretization steps respectively.   

If we use finite differences to discretize \eqref{eqn:system_cont}, we note that the gradient components in the $x$ and $y$ directions are defined on horizontal and vertical edges. Thus the norm of the discretized gradient is not defined at any particular edge. To obtain the gradient at a single spatial location in the discretized problem, we interpolate the gradient approximated values from their respective edges to the nodes and compute the gradient norm at the nodes. Thus it also makes sense to interpret the internal functional $H_{ii}$ as a nodal based quantity and to completely determine the conductivity by interpolating a node based quantity. These interpolations between edges and nodes are achieved with the following matrices
\[
\begin{aligned}
N_1:& \text{horizontal edges} \to \text{nodes}, \\
N_2:& \text{vertical edges} \to \text{nodes}, \\
E_1:& \text{nodes} \to \text{horizontal edges}, \\
E_2:& \text{nodes} \to \text{vertical edges}, \\
E_{1,2}:& \text{nodes} \to \text{all edges}.
\end{aligned}
\]
To define these matrices we let $\phi$ be the matrix-valued function
\[
\phi(A) = (\text{Diag}(|A^T| \mathbf{1} ))^{-1}|A^T|,
\]
where $|\cdot|$ is the entry-wise absolute value, $\mathbf{1}$ is an appropriately sized vector of ones, and $\text{Diag}(v)$ denotes the matrix with the vector $v$ on its diagonal. The matrix $\phi(A)$ preserves constant vectors, more precisely, if $c$ is an appropriate sized constant vector $\phi(A)c = c$. The interpolation operators are then defined as 
\[
N_1 = \phi(D_1^T), \quad N_2 = \phi(D_2^T), \quad E_1 = \phi(D_1), \quad E_2 = \phi(D_2), \quad E_{1,2}= \begin{bmatrix}
E_1 \\ E_2
\end{bmatrix}.
\] 

Given $H_{ii}$ (defined at the nodes) and $e_i$ (defined at the boundary nodes)  the discrete inverse problem for real conductivity is then to find $s$  and $u_i$ (defined at the nodes) such that for $i=1,...,N$, 
\begin{equation}\label{eqn:syst_disc_real}
\begin{aligned}
&D^T[E_{1,2}s \odot (Du_i)]_I = 0,\\
&u_i|_B-e_i =0,  \\
&H_{ii} - \Mb{N_1(E_1 s \odot |D_1u_i|^2)+N_2(E_2 s \odot |D_2u_i|^2)}_I =0,
\end{aligned}
\end{equation}
where $\odot$ is the Hadamard or componentwise product.
We note that in the first equation of \eqref{eqn:syst_disc_real}, we have a graph Laplacian with edge weights given by $E_{1,2}s$, see e.g. \cite{Chung:1997:SGT}.
This system is modified slightly under the assumption that the conductivity is known in a small neighborhood of the boundary. The modified system is solved using Gauss-Newton iteration. However, the interpolation process introduces a null space into the Jacobian. We use Tikhonov regularization with a parameter $\gamma$ to prevent this null space from interfering when solving for the Gauss-Newton step. The parameter corresponds to adding a penalty term of $\gamma \| w \|^2$, when solving the least squares problem for finding a Gauss-Newton step $w$. An Armijo line search is used as  globalization strategy (see e.g. \cite{Nocedal:2006:NO}).

Given $H_{jj}$ and $e_j$, the complex inverse problem is to recover $s'$, $s''$, $u_j'$, and $u_j''$ for $j=1,...,N$,
\begin{equation}\label{eqn:syst_disc_complex}
\begin{aligned}
&D^T[E_{1,2} s' \odot (Du_j')]_I - D^T[E_{1,2} s'' \odot (Du_j'')]_I= 0, \\
&D^T[E_{1,2} s' \odot (Du_j'')]_I + D^T[E_{1,2} s'' \odot (Du_j')]_I= 0,  \\
&u_j'|_B-e_j' =0,  \\
&u_j''|_B-e_j'' =0,  \\
&H_{jj} - \Mb{N_1(E_1 s' \odot |D_1u_j'|^2)+N_2(E_2 s \odot |D_2u_j'|^2)}_I\\&+\Mb{N_1(E_1 s' \odot |D_1u_j''|^2)+N_2(E_2 s' \odot |D_2u_j''|^2)}_I =0.
\end{aligned}
\end{equation}
A similar Gauss-Newton procedure was used to solve \eqref{eqn:syst_disc_complex}.

\textbf{Heating Patterns:} Recall that to obtain the measurements $H_{ii}$; we need to locally heat a region of the conducting plate. This requires numerically approximating \eqref{eqn:diff_temp} at both the background temperature $T_0$ and when the plate is locally heated according to $\delta T(x)$. For our measurements we let $\delta T(x)$ be the Gaussian heating pattern,
\[ 
g( x,a) = (2\pi a)^{-1}\exp\M{-|x|^2(2a)^{-1}}
\]
where $|\cdot|$ denotes the 2-norm. The heating pattern $g(x,a)$ can be considered an approximate Dirac since it integrates in $x$ to one. This is also similar to the heating pattern from a laser covering an area of roughly $\pi a$.

\textbf{Deterministic measurements:} Continuous measurements from the deterministic model using this heating pattern can be written as
\begin{equation}\label{eqn:det_meas}
H_{ii}(x) = \Ma{\sigma |\nabla u_i|^2,T_0+g(\cdot -x, a)}_{L_2(\Omega )}- \Ma{\sigma |\nabla u_i|^2,T_0}_{L_2(\Omega )}.
\end{equation}
We approximate $H_{ii}(x)$ by evaluating the heating pattern at each node $x$ in the discrete model. Then to approximate the inner products, we use a uniform fine grid with $\tilde{n}^2$ nodes such that $\tilde{n}>n$. The number of fine grid nodes $\tilde{n}$ is chosen such that there are at least four fine grid nodes per effective area of the heating pattern, i.e. $\pi a$. 

\textbf{Stochastic measurements:} The measurements from the simulated random current model are given by approximating 
\begin{equation}\label{eqn:stoch_meas}
H_{ii}(x) = \Ma{\Ma{\sigma |\nabla u_i|^2,T_0+g(\cdot -x, a)}_{L_2(\Omega )}} - \Ma{\Ma{\sigma |\nabla u_i|^2,T_0}_{L_2(\Omega )}},
\end{equation}
where the outer angular brackets denote ensemble averaging. The average is approximated empirically with $M$ realizations of random currents where the inner product for each realization is approximated using a uniform fine grid. The realizations of the background temperature measurements (the rightmost term in \eqref{eqn:stoch_meas}) are not recalculated for each heating pattern. For each realization, the currents at every \textit{fine grid} edge midpoint $e$ are sampled from a mean zero random normal distribution with standard deviation $\sqrt{\kappa(T_0+g(e,a))s(e)/\pi}$ (heated) or $\sqrt{\kappa T_0s(e)/\pi}$ (unheated) as determined by \eqref{eq:currcorr}.

We believe the simulation method used for realizations of random currents creates a more challenging problem than experimental data. In practice, measurements could be taken over a time interval and then averaged over time. This gives temporal structure to the data that is not reflected by our simulations which ignore the temporal correlation of the random currents.

\subsection{Real conductivity}\label{sec:real} First, we consider problems of purely real conductivity ($\sigma = \sigma'$) in both the case of measurements from simulated random currents (stochastic model) and measurements using their variances (deterministic model). For both problems we consider the Dirichlet boundary conditions \eqref{eq:aux1} using $e_1 = (x_1+x_2)/10$ and $e_2 = (1+x_1-x_2)/10$. For the more challenging problem of the experimental boundary conditions \eqref{eq:aux2}, we only consider the deterministic model of measurements.  

We show in \cref{fig:det_real} numerical reconstructions for a purely real conductivity in the deterministic and stochastic models. The deterministic measurements $H_{ii}$ in \eqref{eqn:det_meas} are taken with $T_0=300$ and $a = 0.01$ on a $60 \times 60$ coarse grid using a $120 \times 120$ fine grid to approximate the integrals. The same conditions were used for the stochastic model, where in addition we took $T_0=0.01$ using 1000 realizations of random currents. We chose a particularly low background temperature $T_0$ to get clean enough data with the number of realizations we chose. As can be expected from \cref{rem:measure_size}, the signal to noise ration for the differential temperature measurements worsens for large $T_0$. We mention that our choice of discretization ensures that each approximate Dirac heating pattern ($a=0.01$) covers a minimum of four grid points. 

The conductivity is assumed to be known for nodes that are $0.5$cm, or less, away from the boundary. The Tikhonov regularization parameter is $\gamma = 5^{-4}$, and iterations are run until the 2-norm of the step is less than $0.1$. The initial guess is the solution to \eqref{eqn:syst_disc_real} with a constant conductivity. 

\begin{figure}[h]
	\centering 
	\begin{tabular}{ccc}
	\includegraphics[width=45mm]{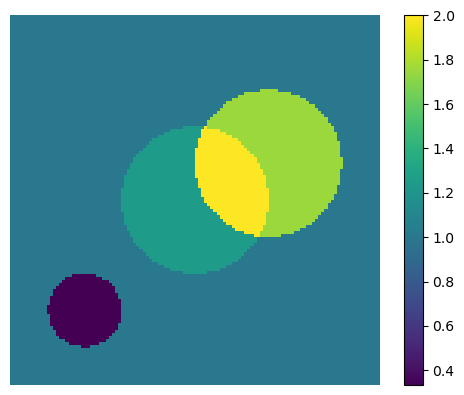} &
	\includegraphics[width=45mm]{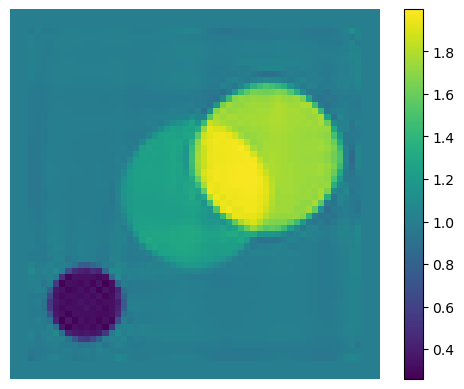} & \includegraphics[width=45mm]{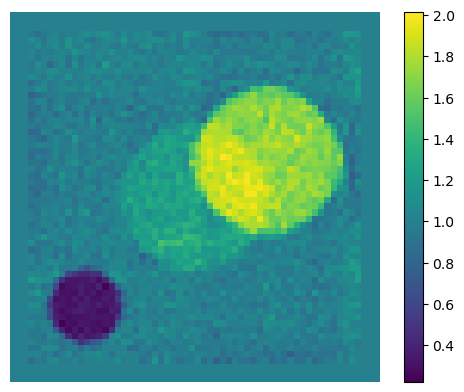} \\
	(a)   & (b) & (c) 
	\end{tabular}
	\caption{Reconstructions of purely real conductivity values (cm$^{-1}$ $k\Omega^{-1}$) on a $10$cm$\times$10cm square domain. The ground truth conductivity in (a) is evaluated on the fine grid. Both the reconstructions using the deterministic model (b) and a stochastic model (c) are evaluated on the coarse grid.}
	\label{fig:det_real}
\end{figure}

The numerical example in \cref{fig:det_bc} uses data from experimental boundary conditions in \eqref{eq:aux2}.The set $\Gamma$ defined in \eqref{eqn:gamma}, corresponds to the electrode functions. On $\Gamma$, we use electrode functions $g_1$ and $h_1$ \eqref{eqn:bc} for the boundary conditions. This set has gaps of $1$cm at the center of each side of the square with no flux conditions. The no flux conditions are enforced by using centered approximations to the nodes on $\partial \Omega - \Gamma$ (see, e.g., \cite[sec 2.12]{Leveque:2007:FDM}). The ground truth conductivity is given in \cref{fig:det_bc} (a). The reconstructions are evaluated on a $100 \times 100$ coarse grid, and a $200 \times 200$ fine grid is used to evaluate the measurements \eqref{eqn:det_meas}. A minimum of twelve fine grid points are in the effective area of each approximate Dirac heating pattern. 

The Gauss-Newton iteration is regularized with $\gamma = 3^{-3}$, and iterations are run until the 2-norm of the step size is less than $0.1$. The reconstructions in \cref{fig:det_bc} are close to the original conductivity, although there are some numerical artifacts due to the gaps between the electrodes. 

\begin{figure}[h!]
	\centering 
	\begin{tabular}{ccc}
		\includegraphics[width=45mm]{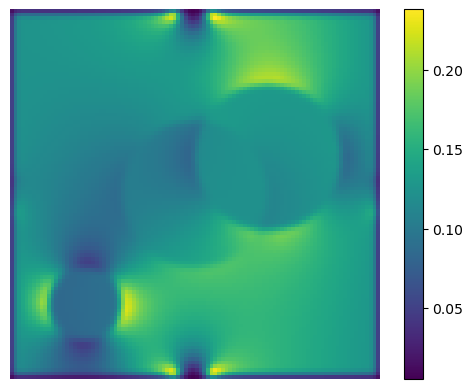} &
		\includegraphics[width=45mm]{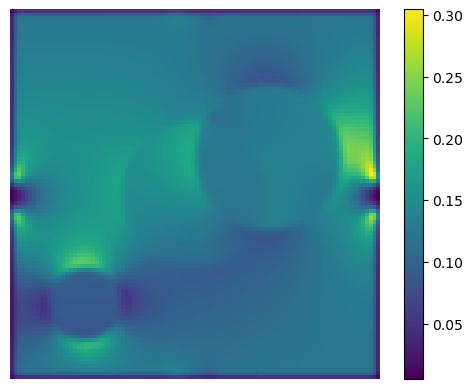} & 
		\includegraphics[width=45mm]{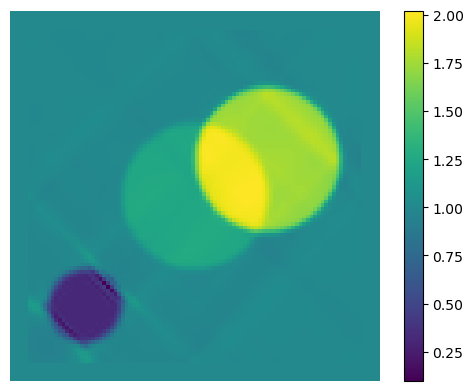} \\
		(a)  & (b)  & (c) 
	\end{tabular}
	\caption{Conductivity values (cm$^{-1}$ $k\Omega^{-1}$) for a numerical reconstruction (c) of the same conductivity as \cref{fig:det_real} using the experimental boundary conditions in \cref{eq:aux2}. The data $H_{ii} =\Re(\sigma(x))|\nabla u_i(x)|^2$ used for the reconstructions is shown in (a) for $g_1$ and in (b) for $h_1$, with units  cm$^{-3}$ $k\Omega^{-1}$.}
	\label{fig:det_bc}
\end{figure}

\subsection{Complex conductivity}\label{sec:complex} 
An example of a complex conductivity reconstruction using the deterministic model of measurements can be seen in \cref{fig:complex_rep}. Four experiments are used with the Dirichlet boundary conditions $\tilde{g}_{1}, \tilde{g}_{2}, \tilde{h}_{1}, \tilde{h}_{2}$ given in \eqref{eqn:bc_com}. The reconstructions are evaluated on a $100 \times 100$ coarse grid, and a $200 \times 200$ fine grid is used to evaluate the measurements \eqref{eqn:det_meas}. The Gauss-Newton iteration now uses $\gamma = 1^{-4}$, and iterations are run until the 2-norm of the step size is less than $0.1$. A constant complex conductivity and the corresponding solutions to \eqref{eqn:syst_disc_complex} are used for the initial guess. 

The numerical artifacts in the complex reproduction are consistent with the areas in \cref{fig:sym_complex} (c), where the maximum condition number \eqref{eqn:condition} of the symbol is largest. Intuitively, reconstructing the imaginary conductivity may be more challenging as it does not explicitly appear in the measurements. In the discrete system \eqref{eqn:syst_disc_complex} it appears only when coupled with a gradient of the real or imaginary auxiliary field. When the gradient of the real part is large it may overwhelm the contribution of the complex conductivity. This is the exact behavior we see in our numerical experiments. When the real conductivity is high, the gradient of the real field is large, and the reconstructed complex conductivity (\cref{fig:complex_rep} (d)) is lower than the true value. 

\begin{figure}[h]
	\centering
	\begin{tabular}{cc}
		\includegraphics[width=55mm]{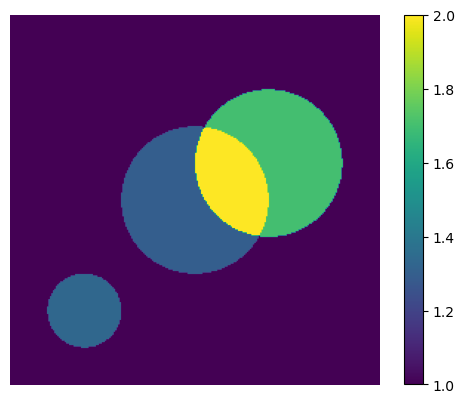} & 
		\includegraphics[width=55mm]{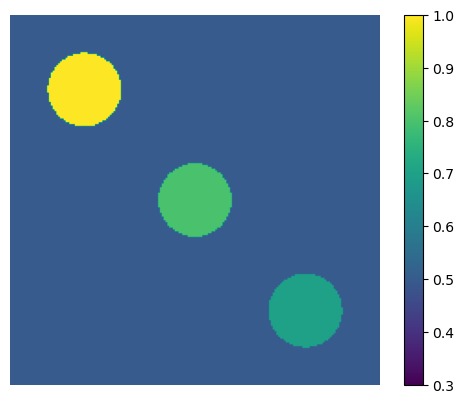} \\
		(a) True $\sigma'$ & (b) True $\epsilon \omega$ \\
		\includegraphics[width=55mm]{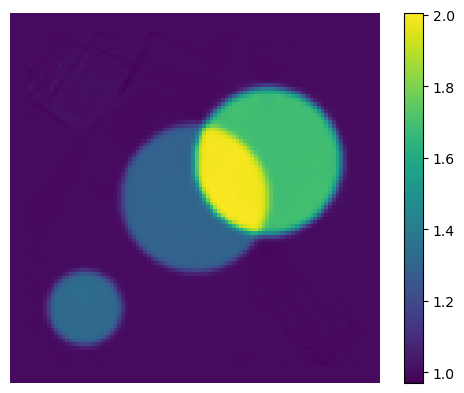} & 
		\includegraphics[width=55mm]{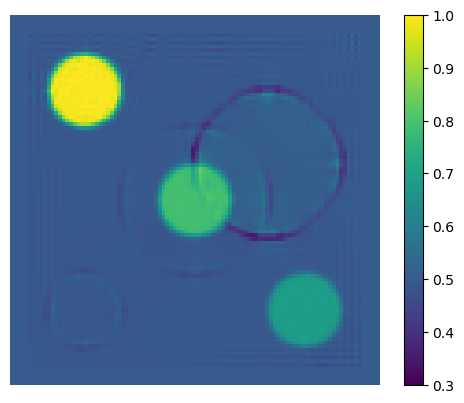} \\
		(c) Reconstructed $\sigma'$ & (d) Reconstructed $\epsilon \omega$
	\end{tabular}
	\caption{Real and imaginary conductivity values (cm$^{-1}$ $k\Omega^{-1}$) for a numerical reconstruction (c) \& (d) of a complex conductivity on a $10$cm$\times$10cm square domain. The ground truth (a) \& (b) is evaluated on the fine grid and the reconstruction is evaluated on the coarse grid. }\label{fig:complex_rep}
\end{figure}

\section{Summary and perspectives}
\label{sec:summary}
We propose a new hybrid inverse problem for recovering the conductivity of a body using thermal noise. The fluctuation dissipation theorem for electrodynamic media allows us to relate the variance of thermal noise currents taken with different temperature patterns to the real part of the conductivity of a body. By taking a sufficiently rich set of measurements we can estimate an internal functional that depends on this real conductivity and the solution to an associated auxiliary problem. We show this relation holds for both Dirichlet boundary conditions and mixed Neumann/Dirichlet boundary conditions, the latter of which is a more realistic description of an experimental setup where such measurements might be used. For purely real conductivities, these are power density measurements. This problem of recovering a real conductivity from power density measurements also appears in acousto-electric tomography. 

Before attempting numerical reconstructions we try and determine if the linearized problems are elliptic in the Douglis-Nirenberg sense. The linearized real problem has previously been shown to be elliptic, given the auxiliary fields are nowhere orthogonal or parallel \cite{Bal:2014:HIP}. For the real problem with mixed boundary conditions, we make no effort to show our boundary conditions satisfy this condition. Instead, we numerically evaluate the worst conditioning of the principal symbol at each point in space. The numerical results are consistent with the previous theoretical work in \cite{Bal:2014:HIP}; using more boundary conditions improves the numerical conditioning of the symbol. For the complex symbol, we give a sufficient condition on the auxiliary fields for the problem to be elliptic, without proving that the auxiliary fields can be generated. We perform a similar numerical evaluation of the conditioning of the symbol under a number of different experiments. This evidence indicates that the complex conductivity problem with two boundary conditions is not elliptic. This numerical approach to classification may find use in similar problems, especially in problems with complicated boundary conditions or principal symbols. The clear limitation of this method is that it does not inform the choice of boundary conditions. We note that the conditioning in these problems would not normally be seen as high for other applications. 

Finally, we present a simple discrete model for numerical reconstructions. Our numerical reconstructions are consistent with the linearization study. We also present results using simulated random thermal currents for the case of a purely real conductivity. These simulations ignore temporal correlations, making the problem more challenging. In this case, we can get an accurate, if noisy, reconstruction of the conductivity at a low temperature.   

This method of thermal noise imaging may find applications in e.g. Atomic Force Microscopy, laser weld monitoring. A challenge in using our approach is that the relative size of the measurements due to the background temperature and the heating pattern (see \cref{rem:measure_size}) results in currents that may hard to measure reliably in practice.

Our results are for a fixed frequency $\omega$, and removing this limitation may allow for more accurate reconstructions in the complex case. Considering multiple frequencies means that $\epsilon$ will be treated as a variable instead of $\epsilon \omega$. This is an important difference because it changes the structure of the symbol and requires a separate analysis. Additionally, the relative scale of the variables of interest, $\omega$ and $\epsilon$, changes which may introduce other challenges to the reconstructions. 

\section*{Acknowledgments}
The authors would like to thank Maxence Cassier for useful comments on a draft of this manuscript.

\bibliographystyle{siamplain}
\bibliography{thermal}

\end{document}